\numberwithin{equation}{section}
\theoremstyle{plain}
\newtheorem{theorem}{Theorem}[section]
\newtheorem{lemma}[theorem]{Lemma}
\newtheorem{corollary}[theorem]{Corollary}
\theoremstyle{definition}
\newtheorem{definition}[theorem]{Definition}
\theoremstyle{remark}
\newtheorem{example}[theorem]{Example}
\newcommand{\ev}{\textnormal{ev}}
\newcommand{\tel}{\textnormal{tel}}
\begin{document}

\title{A remark on the group-completion theorem}

\author{Simon Philipp Gritschacher}
\address{Mathematical Institute, University of Oxford, OX2 6GG,
United Kingdom}
\email{gritschacher@maths.ox.ac.uk}

\begin{abstract}
Suppose that $M$ is a topological monoid satisfying $\pi_0M=\mathbb{N}$ to which the McDuff-Segal group-completion theorem applies. This implies that a certain map $f: \mathbb{M}_{\infty}\rightarrow \Omega BM$ defined on an infinite mapping telescope is a homology equivalence with integer coefficients. In this short note we give an elementary proof of the result that if left- and right-stabilisation commute on $H_1(M)$, then the ``McDuff-Segal comparison map'' $f$ is acyclic. For example, this always holds if $\pi_0M$ lies in the centre of the Pontryagin ring $H_{\ast}(M)$. As an application we describe conditions on a commutative $\mathbb{I}$-monoid $X$ under which $\textnormal{hocolim}_{\mathbb{I}}X$ can be identified with a Quillen plus-construction.
\end{abstract}

\maketitle

\section{Introduction and result} \label{sec:background}

Let $M$ be a topological monoid and let $BM$ be its classifying space. The group-completion theorem of McDuff and Segal \cite{mcduffsegalgroupcompletion} relates the homology of $\Omega B M$ to the localization of the Pontryagin ring $H_\ast(M)$ at its multiplicative subset $\pi_0 M$.

\begin{theorem}[{\cite[Prop. 1]{mcduffsegalgroupcompletion}}]
\label{thm:gc}
Suppose the localization $H_{\ast}(M)[(\pi_0 M)^{-1}]$ can be constructed by right-fractions. Then the natural map $M\rightarrow \Omega B M$ induces an isomorphism
\begin{equation*}
H_\ast (M)[(\pi_0 M)^{-1}]\stackrel{\cong}{\longrightarrow} H_\ast(\Omega B M)\, .
\end{equation*}
\end{theorem}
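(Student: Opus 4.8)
The plan is to exhibit the map $M\to \Omega BM$ as the comparison between the strict and homotopy fibres of a map that becomes a \emph{homology fibration} after inverting $\pi_0 M$, and then to identify the strict fibre homology with the localization. Recall that a map $p\colon E\to B$ is a homology fibration if for every $b\in B$ the inclusion $p^{-1}(b)\hookrightarrow \operatorname{hofib}_b(p)$ is a homology isomorphism. First I would form the two-sided bar construction $E_\bullet=B(M,M,\ast)$, the simplicial space with $E_q=M^{q+1}$ whose realisation $|E_\bullet|$ is contractible (it admits an extra degeneracy coming from the free factor $M$), together with the simplicial projection $p_\bullet\colon E_\bullet \to B(\ast,M,\ast)=BM_\bullet$ discarding the leftmost coordinate. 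On realisations, $p=|p_\bullet|\colon |E_\bullet|\to BM$ has contractible total space, so the homotopy fibre over the basepoint is $\Omega BM$, while the strict fibre is a copy of $M$; under the standard identifications the resulting fibre comparison $M\to \Omega BM$ is exactly the natural map of the statement.

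The next step is a recognition principle for homology fibrations: if $p_\bullet\colon E_\bullet\to B_\bullet$ is a map of simplicial spaces such that each level map $p_q$ is a homology fibration and each square formed by a face map $d_i\colon E_q\to E_{q-1}$ over $d_i\colon B_q\to B_{q-1}$ is homology-cartesian (the induced map of fibres is a homology isomorphism), then $|p_\bullet|$ is a homology fibration. For our $p_\bullet$ each level map $M^{q+1}=M\times M^q\to M^q$ is a trivial fibre bundle with fibre $M$, so the levelwise hypothesis is automatic; the content lies in the face squares, where the fibre inclusions are given by right multiplication $\cdot\, m\colon M\to M$ by components $[m]\in\pi_0 M$. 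These are not homology isomorphisms in general, which is precisely why $p$ itself fails to be a homology fibration.

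To remedy this I would stabilise the fibre. Choosing a sequence of elements running cofinally through $\pi_0 M$, I replace the fibre $M$ by the mapping telescope $M_\infty$ of the corresponding right multiplications; carrying this out compatibly in the simplicial direction yields a stabilised projection whose total space is still contractible and whose face squares are now homology-cartesian, since right multiplication by any component becomes invertible on $H_\ast(M_\infty)$. The recognition principle then makes the stabilised map a homology fibration, and comparing its strict fibre $M_\infty$ with the homotopy fibre $\Omega BM$ gives $H_\ast(M_\infty)\cong H_\ast(\Omega BM)$. Finally the right-fractions hypothesis identifies the telescope homology $H_\ast(M_\infty)=\operatorname{colim}_i H_\ast(M)$, the colimit taken along right multiplication by the chosen cofinal components, with the localization $H_\ast(M)[(\pi_0 M)^{-1}]$, which completes the argument.

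I expect the main obstacle to be the recognition principle itself, namely proving that a levelwise homology fibration with homology-cartesian face squares realises to a homology fibration. This is a gluing argument over the skeletal filtration of $|B_\bullet|$, run on homology rather than homotopy, and it must be carried out with some care because $BM$ is typically not simply connected: the homology Leray--Serre comparison involves the action of $\pi_1 BM$ on the fibre homology, and one must ensure that this action, together with the left- and right-stabilisation maps, is suitably compatible. Controlling precisely this compatibility --- automatic when $\pi_0 M$ is central but delicate in general --- is where the right-fractions hypothesis does its real work, and it is the point that the present note analyses further.
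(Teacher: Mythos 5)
First, note that the paper does not prove this statement at all: it is imported verbatim from McDuff--Segal \cite[Prop.~1]{mcduffsegalgroupcompletion}, so there is no internal proof to compare against. What you have written is a reconstruction of the original homology-fibration argument, and its overall architecture (bar construction with contractible total space, simplicial recognition principle for homology fibrations, stabilisation of the fibre by a telescope, identification of the telescope homology with the localization) is the right one. The recognition principle you flag as the main obstacle is indeed the hard technical input --- it is McDuff--Segal's criterion for realisations of simplicial homology fibrations, whose proof requires properness hypotheses and is genuinely delicate --- but it is reasonable to quote it as a black box.

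There is, however, a concrete step that fails as written: the simplicial compatibility of your stabilisation. You place the module coordinate on the \emph{left}, in $B(M,M,\ast)$ with $E_q=M\times M^q$, so that the face map $d_0$ sends $m_0\mapsto m_0m_1$ and the fibre transports are \emph{right} multiplications, as you say. But then the proposed stabilisation $m_0\mapsto m_0x$ is \emph{not} a self-map of simplicial spaces over $BM_\bullet$, because $(m_0x)m_1\neq(m_0m_1)x$ in general; the only stabilisations that commute with these face maps are \emph{left} multiplications $m_0\mapsto xm_0$, whose telescope computes the left-fraction colimit, not the right-fraction one your hypothesis controls. The standard fix is to use the opposite bar construction: take the left $M$-module $X=\textnormal{tel}(M\xrightarrow{\cdot x}M\xrightarrow{\cdot x}\cdots)$ (left multiplication by $M$ descends to this telescope of right multiplications), form $B(\ast,M,X)\to BM$ with contractible total space, and observe that the fibre transports are now left multiplications by elements of $\pi_0M$, which act invertibly on $H_\ast(X)\cong H_\ast(M)[(\pi_0M)^{-1}]$ precisely because the right-fractions hypothesis identifies the telescope homology with the localization, in which $\pi_0M$ is invertible. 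This left/right bookkeeping is not cosmetic: it is exactly where the right-fractions hypothesis does its work, rather than in the $\pi_1$-compatibility you point to at the end (the homology fibration criterion makes no simplicity demand on local coefficients --- indeed the failure of the theorem to control local coefficients is the entire subject of the present note).
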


In this paper all homology groups are understood to be singular homology with integer coefficients. For what is meant by ``can be constructed by right-fractions'' we refer the reader to \cite[Rem. 1]{mcduffsegalgroupcompletion}, where this is explained. Let us now assume, for simplicity, that $\pi_0 M$ is finitely generated. Let $x_1,\dots,x_k \in M$ be a set of generators and define $x:=x_1\cdots x_k\in M$. Then the infinite mapping telescope
\begin{equation*}
\mathbb{M}_{\infty}:=\textnormal{tel}(M\stackrel{\cdot x\;}{\longrightarrow}M\stackrel{\cdot x\;}{\longrightarrow}M\stackrel{\cdot x\;}{\longrightarrow}\dots)
\end{equation*}
has $\pi_0 M$-local homology $H_\ast(\mathbb{M}_{\infty})= H_\ast(M)[(\pi_0 M)^{-1}]$. There is not directly a map $\mathbb{M}_{\infty}\rightarrow \Omega B M$ inducing the isomorphism of Theorem \ref{thm:gc}. However, a natural map into a space weakly equivalent to $\Omega BM$ can be constructed, see \cite{mcduffsegalgroupcompletion,randalwilliamsperfection} for details. We shall not take this too precisely, but simply speak of a comparison map
\begin{equation*}
\label{eq:comparisonmap}
f: \mathbb{M}_{\infty}\rightarrow \Omega B M\, .
\end{equation*}

It is desirable to know if this map induces an isomorphism on homology for all choices of local coefficients on the target space. Such a map is \emph{acyclic}, i.e. its homotopy fibre is an acyclic space. If the comparison map $f$ is acyclic, it can be converted into a weak homotopy equivalence by means of the Quillen plus-construction. Randal-Williams \cite{randalwilliamsperfection} has proved the following strengthening of Theorem \ref{thm:gc} under the hypothesis that $M$ is homotopy commutative (see also \cite{millergroupcompletion}).

\begin{theorem}[\cite{randalwilliamsperfection}] \label{thm:rw}
Suppose $M$ is homotopy commutative. Then the comparison map $f$ is acyclic. As a consequence, the fundamental group of $\mathbb{M}_{\infty}$ with any choice of basepoint has a perfect commutator subgroup.
\end{theorem}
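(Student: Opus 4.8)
The plan is to upgrade the integral homology equivalence of Theorem~\ref{thm:gc} to acyclicity by showing that the homotopy fibre of $f$ is acyclic, and then to read off the assertion about the commutator subgroup. Set $Y=\Omega BM$ and let $F$ denote the homotopy fibre of $f\colon\mathbb{M}_\infty\to Y$ over a chosen basepoint. Recall that $f$ is acyclic exactly when $\widetilde{H}_\ast(F;\mathbb{Z})=0$, equivalently when $f$ induces an isomorphism on homology with every local coefficient system on $Y$. Since $Y$ is a loop space, $\pi_1(Y)$ is abelian, so the isomorphism $H_1(\mathbb{M}_\infty)\xrightarrow{\ \cong\ }H_1(Y)$ coming from Theorem~\ref{thm:gc} shows that $\pi_1(f)$ is surjective with kernel precisely the commutator subgroup $[\pi_1\mathbb{M}_\infty,\pi_1\mathbb{M}_\infty]$; in particular $F$ is connected.

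I would then isolate as the technical heart the claim that $\pi_1(Y)$ acts trivially on $H_\ast(F;\mathbb{Z})$. Granting this, the Serre spectral sequence of $F\to\mathbb{M}_\infty\to Y$ has untwisted $E^2$-page, and I would compare it, via $f$, with the spectral sequence of the trivial fibration $\mathrm{pt}\to Y\xrightarrow{=}Y$: the map of base spaces is the identity and the map of total spaces is the integral homology isomorphism of Theorem~\ref{thm:gc}, so the Zeeman comparison theorem forces the map of fibres $F\to\mathrm{pt}$ to be a homology isomorphism. Hence $\widetilde{H}_\ast(F;\mathbb{Z})=0$ and $f$ is acyclic.

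The triviality of this monodromy is the only place where homotopy commutativity of $M$ enters, and I would argue at the level of the telescope. A path component of $\mathbb{M}_\infty$ is the mapping telescope of the right-multiplication maps $\cdot x$ between components of $M$, so $\pi_1\mathbb{M}_\infty=\operatorname{colim}_n\pi_1(M,\ast_n)$ along $(\cdot x)_\ast$, and the monodromy of $F$ measures the discrepancy between stabilising on the right and the resulting conjugation coming from the loop-sum on $Y$. Homotopy commutativity makes the class of $x$ central in the Pontryagin ring $H_\ast(M)$, so that right- and left-multiplication by $x$ agree on homology; the plan is to use this to show that the two a priori distinct module structures on $H_\ast(M)$ --- induced by stabilising on the left and on the right --- coincide after inverting $\pi_0 M$, whence the pulled-back coefficient systems on $\mathbb{M}_\infty=\tel(\,\cdot x)$ are constant and the $\pi_1(Y)$-action on $H_\ast(F;\mathbb{Z})$ is trivial.

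Finally, the stated consequence is immediate: an acyclic map has perfect kernel on $\pi_1$, and we identified $\ker\pi_1(f)=[\pi_1\mathbb{M}_\infty,\pi_1\mathbb{M}_\infty]$, so the commutator subgroup of $\pi_1\mathbb{M}_\infty$ is perfect; the same argument applies at every basepoint. I expect the main obstacle to be exactly the monodromy computation, i.e.\ promoting the homotopy-level centrality of $x$ to genuine triviality of the action on $H_\ast(F;\mathbb{Z})$ rather than mere triviality after abelianisation, since the comparison map a priori controls only the integral homology of $\mathbb{M}_\infty$ and not the homotopy type of $F$.
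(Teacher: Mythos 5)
Your overall skeleton (a Zeeman comparison argument hinging on triviality of a monodromy action) matches the strategy the paper uses for its generalisation, Theorem \ref{thm:acyclicity}; note that Theorem \ref{thm:rw} itself is only quoted from \cite{randalwilliamsperfection}, so the relevant in-paper argument is Section \ref{sec:proofacyclicity}. However, there is a genuine gap at the step you yourself flag as the technical heart. You reduce acyclicity to the claim that $\pi_1(\Omega BM)$ acts trivially on $H_\ast(F;\mathbb{Z})$, where $F$ is the homotopy fibre of $f$, and you propose to deduce this from centrality of $x$ in the Pontryagin ring, i.e.\ from the agreement of the left and right $H_\ast(M)$-module structures after localisation. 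That argument controls coefficient systems built from $H_\ast(M)$, hence from the homology of the \emph{total space} $\mathbb{M}_\infty$; it says nothing about the monodromy on the homology of the \emph{fibre} $F$, which is precisely the inaccessible object here. Indeed, for an acyclic map the fibre has trivial reduced homology, so its monodromy is trivially trivial: establishing triviality of the action on $H_\ast(F)$ directly is essentially circular with what you are trying to prove, and your sketch offers no mechanism for computing that action from the monoid structure.

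The way out, which is what both Randal-Williams and the proof of Theorem \ref{thm:acyclicity} use, is to replace $F$ by objects whose monodromy \emph{is} computable: one compares the covering space $Y_\infty\to M_\infty$ associated to the commutator subgroup $G_\infty'$ with the universal cover $W\to\Omega_0 BM$, i.e.\ one compares the two fibration sequences over $K(G_\infty/G_\infty',1)$ rather than the fibration over $\Omega BM$. Triviality of the $\pi_1$-action on $H_\ast(W)$ is free because $\Omega_0BM$ is a connected $H$-space, hence weakly simple; triviality of the deck action on $H_\ast(Y_\infty)$ is where (homotopy) commutativity enters, via the identification $H_\ast(Y_\infty)\cong\operatorname{colim}_n H_\ast(Y_n)$ and the observation that the action of $a\in G_n$ on a stabilised class agrees with the map induced by $-\oplus a\colon Y_n\to Y_{2n}$, which is homotopic to the stabilisation $-\oplus e_n$ since $Y_n$ is path-connected (Lemmas \ref{lem:homology} and \ref{lem:decktrivial}). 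Zeeman comparison then shows $Y_\infty\to W$ is an integral homology equivalence into a simply connected space, hence acyclic, and acyclicity of $f$ follows because the relevant square is a homotopy pullback. Your final paragraph (an acyclic map has perfect $\pi_1$-kernel, and here the kernel is the commutator subgroup because $\pi_1\Omega BM$ is abelian and $H_1(f)$ is an isomorphism) is correct as stated, once acyclicity is actually in hand.
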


The objective of this note is to study the acyclicity of the comparison map under a weaker condition than homotopy commutativity. This is condition $(\dagger)$ below. We restrict ourselves to a simplified setting by assuming that the monoid of components is the natural numbers $\pi_0 M= \mathbb{N}$. Our main result is:

\begin{theorem} \label{thm:acyclicity}
Let $M$ be a topological monoid satisfying $\pi_0M= \mathbb{N}$. Suppose that the localization $H_\ast(M)[(\pi_0 M)^{-1}]$ can be constructed by right-fractions and that \emph{($\dagger$)} holds. Then the fundamental group of $\mathbb{M}_{\infty}$ with any choice of basepoint has a perfect commutator subgroup and the comparison map $f: \mathbb{M}_{\infty}\rightarrow \Omega BM$ is acyclic.
\end{theorem}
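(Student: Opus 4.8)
The plan is to reduce the entire statement to the single assertion that $P:=[G,G]$ is perfect, where $G:=\pi_1(\mathbb{M}_{\infty})$ at a chosen basepoint. First I would note that right multiplication by $x$ is a homotopy equivalence of $\mathbb{M}_{\infty}$ carrying each path component $\mathcal{C}_d$ to the next $\mathcal{C}_{d+1}$; hence all the fundamental groups are isomorphic and it suffices to treat one component, which also takes care of the clause ``with any choice of basepoint''. Since $\Omega BM$ is grouplike, $Q:=\pi_1(\Omega BM)$ is abelian, and Theorem \ref{thm:gc} together with the identification $H_\ast(\mathbb{M}_{\infty})=H_\ast(M)[(\pi_0M)^{-1}]$ shows that $f$ is an integral homology isomorphism. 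Therefore $f_\ast\colon G\to Q$ factors through $G^{\mathrm{ab}}=H_1(\mathbb{M}_{\infty})\xrightarrow{\ \cong\ }H_1(\Omega BM)=Q$, so $f_\ast$ is surjective with kernel exactly $P=[G,G]$. A homology isomorphism whose map on fundamental groups is surjective with perfect kernel is acyclic: one factors $f$ through the plus construction $\mathbb{M}_{\infty}^{+}$ relative to $P$ and checks that the residual map $\mathbb{M}_{\infty}^{+}\to\Omega BM$ is a homology isomorphism inducing an isomorphism on $\pi_1$, hence a weak equivalence by comparison of universal covers. Thus both conclusions follow once $P$ is shown to be perfect.

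To analyse $P$ I would use the description $G=\mathrm{colim}_k\,\pi_1(M_k,x^k)$, the colimit being along right multiplication by $x$, which is exactly the structure map of the telescope. Under the resulting identifications $\mathcal{C}_d\simeq\mathcal{C}_{d+1}$, right multiplication induces the identity of $G$, while left multiplication by $x$ induces an endomorphism $\phi$ of $G$ — conjugation by $x$ in the group-completed sense. This $\phi$ is well defined because left and right multiplication by $x$ commute (both send $m$ to $xmx$ after one step), so left multiplication is a self-map of the direct system. Hypothesis ($\dagger$) says precisely that left and right stabilisation agree on $H_1(M)$, i.e. that $\phi$ acts as the identity on $A:=G^{\mathrm{ab}}=G/P$. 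Consequently $\phi$ descends to the identity of $A$, and the induced automorphism $\bar\phi$ of the conjugation module $M:=P^{\mathrm{ab}}=H_1(P)$ is $\mathbb{Z}[A]$-linear.

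The relations come from the Eckmann–Hilton interchange for the monoid multiplication: for classes $\alpha,\beta$ the right-stabilised copy of $\alpha$ commutes with the left-stabilised copy of $\beta$ inside $\pi_1(M_{a+b})$, that is $[R_{x^{b}}\alpha,\,L_{x^{a}}\beta]=1$. Passing to the colimit this becomes the identity $[\alpha,\phi^{\,n}\beta]=1$ in $G$, valid for all $\alpha,\beta\in G$ and all large $n$. Two consequences finish the proof. First, writing $\phi^{\,n}\beta=\beta w$ with $w\in P$ (possible by ($\dagger$)), the commutator calculus gives $[\alpha,\beta]={}^{\beta}[w,\alpha]\in[G,P]$, so $[G,G]=[G,[G,G]]$. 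Second, taking $\beta=p\in P$ and abelianising $[g,\phi^{\,n}p]=1$ yields $(a-1)\,\bar\phi^{\,n}\bar p=0$ in $M$, where $a\in A$ is the class of $g$; by $\mathbb{Z}[A]$-linearity and invertibility of $\bar\phi$ this forces $(a-1)\bar p=0$, i.e. $A$ acts trivially on $M$, equivalently $[G,P]\subseteq[P,P]$. Combining the two, $[G,G]=[G,P]\subseteq[P,P]\subseteq[G,G]$, so $P=[P,P]$ is perfect.

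The main obstacle is the invertibility of $\bar\phi$ used in the second consequence: it is exactly what allows the $\phi^{\,n}$ produced by the colimit to be cancelled, and it fails without input beyond ($\dagger$). This is where the standing assumption that $H_\ast(M)[(\pi_0M)^{-1}]$ be constructed by right fractions enters, as it makes $[x]$ a two-sided unit after localisation, so that left translation by $[x]$ is a homotopy equivalence of $\Omega BM$; transporting this across the homology equivalence $f$ and comparing the covers classified by $P$ should show that conjugation by $x$ acts invertibly on $M$. I expect this to be the genuinely delicate point. A secondary, more bookkeeping, difficulty is to make the colimit identifications (right multiplication inducing the identity, left multiplication inducing $\phi$) and the interchange relation precise with consistent basepoints across the components of $\mathbb{M}_{\infty}$; these are routine but must be set up with care.
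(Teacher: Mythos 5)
There is a genuine gap, and it sits at the very first step of your reduction. The claim that ``a homology isomorphism whose map on fundamental groups is surjective with perfect kernel is acyclic'' is false. Take $g\colon S^2\rightarrow S^2$ of degree $2$ and let $T_g$ be its mapping torus; the projection $T_g\rightarrow S^1$ is an isomorphism on integral homology (a Wang sequence computation) and an isomorphism on $\pi_1$ (so the kernel is trivial, hence perfect), yet its homotopy fibre is $S^2$, which is not acyclic. The point where your argument breaks is the phrase ``hence a weak equivalence by comparison of universal covers'': an integral homology isomorphism which is also a $\pi_1$-isomorphism need not induce a homology isomorphism of universal covers, because integral homology only sees the trivial coefficient system. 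What is actually needed is that $f$ be a homology equivalence for local coefficients, equivalently that $\pi_1(\mathbb{M}_\infty)$ act trivially on the homology of the cover $Y_\infty$ classified by $P$ (the target $\Omega_0 BM$ is an $H$-space, hence weakly simple, so the corresponding statement holds there for free). Establishing this triviality of the deck action is exactly where the hypothesis $(\dagger)$ does its real work -- it is the content of Lemma \ref{lem:decktrivial} -- and your proposal never engages with it. Once one has that lemma, the paper's route (Zeeman comparison of the Serre spectral sequences of $Y_\infty\rightarrow M_\infty\rightarrow K(G_\infty/G_\infty',1)$ and of the universal covering sequence of $\Omega_0BM$) delivers both conclusions at once: $H_1(Y_\infty)=0$ gives perfection of $G_\infty'$, and the resulting homology equivalence $Y_\infty\rightarrow W$ into a simply connected space is acyclic, whence so is $f_0$ by pullback. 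So perfection is an output of the argument, not a sufficient input as your reduction assumes.

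A secondary problem is your proof of perfection itself: it hinges on the invertibility of $\bar\phi$ on $P^{\mathrm{ab}}$, which you acknowledge you cannot establish and which does not obviously follow from the right-fraction hypothesis (that hypothesis concerns $H_\ast(M)[(\pi_0M)^{-1}]$, not the $\mathbb{Z}[A]$-module $H_1(P)$). This detour is avoidable: applying $(\dagger')$ twice together with the interchange relation $(\alpha\oplus e)(e\oplus\beta)=(e\oplus\beta)(\alpha\oplus e)$ and Hall's identity exhibits every commutator of $G_\infty$ as a conjugate of a commutator of elements of $G_\infty'$, with no inversion of $\bar\phi$ required; this is the argument of \cite[Prop.~3.1]{randalwilliamsperfection}, reproduced in Lemma \ref{lem:quasiperfect}. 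But even with perfection repaired, the acyclicity claim still fails for the reason above.
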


As a direct consequence of acyclicity we obtain

\begin{corollary} \label{cor:weakequivalence}
Under the assumptions of Theorem \ref{thm:acyclicity} the induced map $f^+: \mathbb{M}_{\infty}^+\rightarrow \Omega B M$ is a weak homotopy equivalence.
\end{corollary}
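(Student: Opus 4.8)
The plan is to derive the corollary formally from the acyclicity of $f$ established in Theorem \ref{thm:acyclicity}, combined with the standard properties of the Quillen plus-construction, treating the two (disconnected) spaces component by component.

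By Theorem \ref{thm:acyclicity} the comparison map $f$ is acyclic, hence in particular a homology isomorphism; comparing $H_0$ (both $\pi_0$'s being $\mathbb{Z}$) shows that $f$ is a bijection on path-components, and the plus-construction $q\colon \mathbb{M}_\infty\to\mathbb{M}_\infty^+$ is performed on each component with respect to the maximal perfect subgroup of its fundamental group. Since Theorem \ref{thm:acyclicity} asserts that the commutator subgroup $P:=[\pi_1\mathbb{M}_\infty,\pi_1\mathbb{M}_\infty]$ is perfect, and since any perfect subgroup of $\pi_1\mathbb{M}_\infty$ is contained in $P$ while $P$ is itself perfect, $P$ is exactly this maximal perfect subgroup. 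Consequently $q$ is acyclic and $\pi_1\mathbb{M}_\infty^+\cong \pi_1\mathbb{M}_\infty/P=H_1\mathbb{M}_\infty$ is abelian. Because $\Omega BM$ is a loop space its fundamental group is abelian, so $f_\ast$ annihilates $P$, and the universal property of the plus-construction yields a factorisation $f\simeq f^+\circ q$ with $f^+\colon\mathbb{M}_\infty^+\to\Omega BM$.

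I would next check that $f^+$ is itself acyclic, via the two-out-of-three principle for acyclic maps. For any local coefficient system $\mathcal{L}$ on $\Omega BM$ one has $q^\ast(f^+)^\ast\mathcal{L}=f^\ast\mathcal{L}$, so in the factorisation $f_\ast=f^+_\ast\circ q_\ast$ on the corresponding twisted homology groups both $f_\ast$ (as $f$ is acyclic) and $q_\ast$ (as $q$ is acyclic) are isomorphisms; hence so is $f^+_\ast$, and as $\mathcal{L}$ ranges over all local systems this says precisely that $f^+$ is acyclic. Finally I would invoke the fact that an acyclic map of connected spaces which induces an isomorphism on $\pi_1$ is a weak homotopy equivalence: its homotopy fibre $F$ is acyclic, so $\pi_1 F$ is perfect, and $\pi_1$-injectivity of $f^+$ forces $\pi_1 F$ through the long exact sequence to be a quotient of the abelian group $\pi_2\Omega BM$, hence trivial, so that $F$ is simply connected and acyclic and therefore weakly contractible by Hurewicz.

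It thus remains to see that $f^+$ induces an isomorphism on fundamental groups, and this bookkeeping is where I expect the only genuine care to be required, the substantive work having already been done in Theorem \ref{thm:acyclicity}. Being acyclic, $f^+$ is in particular an $H_1$-isomorphism; since $\pi_1\mathbb{M}_\infty^+$ and $\pi_1\Omega BM$ are both abelian they coincide with the respective first homology groups, and the homomorphism $f^+$ induces on $\pi_1$ is identified with the isomorphism it induces on $H_1$. The points to verify carefully are that these abelianisation identifications really are the ones induced by $f^+$, and that the argument is run on each of the matching components; granting this, $f^+$ is a componentwise weak equivalence and a bijection on $\pi_0$, hence a weak homotopy equivalence, as claimed.
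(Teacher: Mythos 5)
Your proposal is correct and follows essentially the same route as the paper: both reduce the corollary to the statement that $f^+$ is acyclic and a $\pi_1$-isomorphism, and then invoke the standard fact that such a map is a weak equivalence (the paper via universal covers and Whitehead, you via the homotopy fibre --- two equivalent formulations of the same lemma). You simply make explicit several steps the paper leaves implicit, namely the factorisation $f\simeq f^+\circ q$, the two-out-of-three argument for acyclicity of $f^+$, and the identification of the $\pi_1$-map with the $H_1$-isomorphism.
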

Here the plus-construction is applied to each path-component separately and with respect to the maximal perfect subgroup of the fundamental group.
\begin{proof}
Since the map $f^+$ is acyclic, the induced map on universal coverings is a homology isomorphism and therefore a weak equivalence by the Whitehead theorem. The map $f^+$ is in addition a $\pi_1$-isomorphism, hence $f^+$ is a weak equivalence.
\end{proof}

{\noindent \textbf{Notation}.} For the rest of the paper we assume that $\pi_0 M=\mathbb{N}$. Thus we study a sequence of path-connected spaces $(M_n)_{n\geq 0}$ with basepoints $m_n\in M_n$ and associative, basepoint preserving product maps
\begin{equation*}
\mu_{m,n}: M_m\times M_n\rightarrow M_{m+n}
\end{equation*}
for all $m,n\geq 0$. The basepoint $m_0\in M_0$ serves as a two-sided unit. We write
\[
G_n:=\pi_1(M_n,m_n)
\]
and $G_n':=[G_n,G_n]$ for its derived subgroup. We avoid the use of a designated symbol for the concatenation product on fundamental groups. For $a,b\in G_n$ we employ the convention where $a b$ means that $a$ is traversed first, followed by $b$. We let $e_n$ denote the neutral element of $G_n$, that is the class of the constant loop based at $m_n\in M_n$. Via pointwise multiplication of loops the maps $\mu_{m,n}$ induce homomorphisms
\[
\oplus: G_m\times G_n\rightarrow G_{m+n}
\]
which we commonly denote by the symbol $\oplus$ to keep the notation simple. Multiplication by $e_n$ from the right is distributive over the concatenation product in $G_m$ and therefore describes a homomorphism  $-\oplus e_n: G_m \rightarrow G_{m+n}$. Similarly, multiplication from the left $e_m\oplus -$ is a homomorphism. We clearly have $e_n=e_1\oplus \cdots \oplus e_1$ ($n$ times).\bigskip

{\noindent \textbf{The commutativity condition}.} The following condition on the Pontryagin ring $H_{\ast}(M)$ expresses commutativity of left- and right-stabilisation in $H_{1}(M)$. For each $n\geq 0$ let us regard the basepoint $m_n\in M_n$ as a class in $H_{0}(M_n)$ and write a single dot $-\cdot -$ for the Pontryagin product.
\begin{itemize} 
\item[$(\dagger)$]  There is a cofinal sequence $n_0,n_1,n_2,\dots$ in $\mathbb{N}$ so that for all $k\in \mathbb{N}$ and all $a\in H_1(M_{n_k})$ the equality
\begin{equation*} \label{eq:lrstabilisation}
a\cdot m_{n_k}=m_{n_k}\cdot a
\end{equation*}
holds in $H_1(M)\subseteq H_{\ast}(M)$.
\end{itemize}
{\noindent Equivalently,}
\begin{itemize} 
\item[$(\dagger')$]  there is a cofinal sequence $n_0,n_1,n_2,\dots$ in $\mathbb{N}$ so that for all $k\in \mathbb{N}$ and all $a\in G_{n_k}$ there exists $c\in G_{2n_k}'$ so that the equality
\begin{equation*} \label{eq:lrstabilisationprime}
a\oplus e_{n_k}=(e_{n_k}\oplus a)c
\end{equation*}
holds in $G_{2n_k}$.
\end{itemize}
{\noindent Indeed, the definition of the Pontryagin product implies that for all $n\geq 0$ the diagram
\[
\xymatrix{
G_n \ar[r]^-{e_1 \oplus -} \ar[d]	& G_{n+1} \ar[d]	\\
H_1(M_n) \ar[r]^-{{m_1}\cdot -}		& H_1(M_{n+1})
}
\]
commutes, where the vertical maps are abelianization. The diagram shows $(\dagger)\Leftrightarrow (\dagger')$.}

\begin{example} \label{ex:central}
If we assume that $\pi_0 M$ is central in $H_\ast(M)$, then $(\dagger)$ holds. In particular, $(\dagger)$ holds for homotopy commutative $M$.
\end{example}

{\noindent \textbf{Organisation of this paper}.} The proof of the main result, Theorem \ref{thm:acyclicity}, occupies Section \ref{sec:proofacyclicity}. The proof is a standard spectral sequence argument which rests on two preparatory lemmas. We will first prove the theorem and then prove the necessary lemmas. In Section \ref{sec:applicationimonoid} we begin by recalling the notion of an $\mathbb{I}$-space and of a (commutative) $\mathbb{I}$-monoid. We then apply Theorem \ref{thm:acyclicity} to show that under suitable hypotheses the infinite loop space associated to a commutative $\mathbb{I}$-monoid can be identified with a Quillen plus-construction. Finally, in Section \ref{sec:perfection} we show that $G_\infty=\textnormal{colim}_n G_n$ has a perfect commutator subgroup whenever $(G_n)_{n\geq 0}$ is a direct system of groups with multiplication maps $\oplus$ as described in the preceding paragraphs and which satisfies condition $(\dagger')$. This is merely a replication of an argument of Randal-Williams \cite[Prop. 3.1]{randalwilliamsperfection} and is not relevant for the body of the paper, but it provides a direct proof of the first statement in Theorem \ref{thm:acyclicity}.

\section{Proof of Theorem \ref{thm:acyclicity}} \label{sec:proofacyclicity}

Let $G_\infty$ be the colimit of the direct system of groups $-\oplus e_1: G_n\rightarrow G_{n+1}$. It is isomorphic to the fundamental group of the infinite mapping telescope
\begin{equation} \label{eq:telescopecomponent}
M_{\infty}:=\textnormal{tel}(M_0\xrightarrow{\cdot m_1}M_1 \xrightarrow{\cdot m_1} M_2 \xrightarrow{\cdot m_1} \dots)\, ,
\end{equation}
which we give the basepoint $m_0\in M_0\subset M_{\infty}$. Note that $\mathbb{M}_{\infty}\simeq \mathbb{Z}\times M_{\infty}$. We will be working with covering spaces, so we shall assume that all our spaces be locally path-connected and semi-locally simply connected. For example, this includes all CW-complexes. For our purposes this is not a restriction, since all of our results remain valid by replacing the spaces by the realization of their singular complex. Let $Y_{\infty}$ denote the covering space of $M_{\infty}$ corresponding to the derived subgroup $G_{\infty}'$. This is a regular covering and the action via deck translations of the fundamental group $G_\infty$ on $Y_{\infty}$ factors through the abelianisation $G_{\infty}/G_{\infty}'$. The proof of Theorem \ref{thm:acyclicity} only depends upon the following lemma, which we will prove below.

\begin{lemma}
\label{lem:decktrivial}
Suppose $M$ satisfies $(\dagger)$. Then the action of $G_\infty/G_{\infty}'$ on $H_\ast(Y_\infty)$ through deck translations of $Y_{\infty}$ is trivial.
\end{lemma}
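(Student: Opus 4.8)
The plan is to show that each deck transformation $\tau$ corresponding to a generator of $A:=G_\infty/G_\infty'=H_1(M_\infty)$ acts trivially on $H_\ast(Y_\infty)$. I will use repeatedly the following principle: if $\phi\colon M_\infty\to M_\infty$ admits a self-homotopy $\phi\simeq\phi$ whose lift to $Y_\infty$ is a homotopy from a chosen lift $\tilde\phi$ to $\tau\circ\tilde\phi$, then $\tau_\ast\circ\tilde\phi_\ast=\tilde\phi_\ast$ on $H_\ast(Y_\infty)$; in particular $\tau_\ast=\mathrm{id}$ as soon as $\tilde\phi_\ast$ is invertible. Two self-maps of $M_\infty$ will be relevant. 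First, the shift $s\colon M_\infty\to M_\infty$ induced by the bonding maps $\cdot\,m_1$ is homotopic to the identity by the canonical sliding homotopy of a mapping telescope; lifting this homotopy (starting at $\mathrm{id}_{Y_\infty}$) produces a lift $\tilde s\simeq\mathrm{id}_{Y_\infty}$, so right-stabilisation, and hence $\tilde s^{\,n_k}$, acts as the identity on $H_\ast(Y_\infty)$.

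Second, I feed the commutativity hypothesis in through left multiplication. For $a\in G_{n_k}$ with $n_k$ in the cofinal sequence of $(\dagger')$, the maps $x\mapsto \mu(a(t),x)$ are, by associativity, compatible with the bonding maps $\cdot\,m_1$ (whereas right multiplication is not, precisely because the telescope is built from right multiplication, so that $a(t)$ would have to commute with $m_1$). Hence they define a self-homotopy $L_t\colon M_\infty\to M_\infty$ of the left-shift $\ell^{\,n_k}$, i.e.\ left multiplication by $m_{n_k}$, and the trace $t\mapsto L_t(m_0)=a(t)$ represents the image of $a$ in $G_\infty$, the class defining $\tau_{[a]}$. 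Lifting $L_t$ to $Y_\infty$ and applying the principle above yields the key relation
\[
\tau_{[a]\,\ast}\circ\bigl(\tilde\ell^{\,n_k}\bigr)_\ast=\bigl(\tilde\ell^{\,n_k}\bigr)_\ast \qquad\text{on } H_\ast(Y_\infty).
\]
Since the classes $[a]$ with $a\in G_{n_k}$ (as $k$ varies) generate $A$, the lemma follows once I know that the left-shift $\tilde\ell^{\,n_k}$ induces an isomorphism on $H_\ast(Y_\infty)$.

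The main obstacle is exactly this last point: reconciling left- and right-stabilisation on the homology of the abelian cover. The maps $m_1\cdot(-)$ and $(-)\cdot m_1$ from $M_n$ to $M_{n+1}$ are not homotopic in general, so their lifts to the cover are unrelated by any homotopy; what $(\dagger)$, in the form $a\oplus e_{n}=(e_{n}\oplus a)c$ with $c\in G_{2n}'$, supplies directly is only that they agree after passing to $A=H_1$. The plan is to leverage this to show that the left-shift is cofinal with respect to the right-shift in the colimit $H_\ast(Y_\infty)=\operatorname{colim}_m H_\ast(\tilde M_m)$, where $\tilde M_m$ denotes the restriction of $Y_\infty$ over $M_m\subset M_\infty$ and the colimit is formed along the right-stabilisation lifts; this makes $\bigl(\tilde\ell^{\,n_k}\bigr)_\ast$ invertible, and together with $\tilde s_\ast=\mathrm{id}$ it identifies it with the identity, finishing the proof. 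When $\pi_0M$ is central in $H_\ast(M)$ this step is immediate, since then $[m_1]$ commutes with all of $H_\ast(M)$ and left- and right-stabilisation coincide on every homology group; the genuine work under the weaker hypothesis $(\dagger)$ is to upgrade commutativity from $H_1$ to the full homology of $Y_\infty$, and I expect this comparison, rather than the homotopy-theoretic bookkeeping above, to be the crux.
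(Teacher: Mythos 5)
Your first step is correct and requires no hypothesis: lifting the self-homotopy $L_t(x)=\mu(a(t),x)$ of the left-shift $\ell^{\,n_k}$ does give $\tau_{[a]\ast}\circ(\tilde\ell^{\,n_k})_\ast=(\tilde\ell^{\,n_k})_\ast$ on $H_\ast(Y_\infty)$, and these $[a]$ exhaust $G_\infty$. But precisely because that relation holds for \emph{every} monoid with $\pi_0M=\mathbb{N}$, while the lemma is false without $(\dagger)$, the entire content of the hypothesis has been pushed into the step you leave open: surjectivity of $(\tilde\ell^{\,n_k})_\ast$ on $H_\ast(Y_\infty)$. This is a genuine gap, and not a routine one. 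The untwisted analogue does hold --- $\ell^{\,n_k}$ is a homology equivalence of $M_\infty$ because $[m_{n_k}]$ becomes a unit in $H_\ast(M)[(\pi_0M)^{-1}]$ --- but what you need is the corresponding statement for the cover, i.e.\ for homology with coefficients in $\mathbb{Z}[G_\infty/G_\infty']$. The natural way to deduce it (comparison of the Serre spectral sequences of $Y_\infty\to M_\infty\to K(G_\infty/G_\infty',1)$ with itself along $\ell^{\,n_k}$) requires the coefficient system on the base to be simple, which is exactly the lemma you are trying to prove; and the ``cofinality of left- against right-stabilisation'' you invoke would require commuting $e_{n}$ past arbitrary elements up to homotopy, not just on $H_1$. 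So the argument as it stands is circular or incomplete at its crux.

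The paper's proof avoids this reduction altogether, and in particular your expectation that one must ``upgrade commutativity from $H_1$ to the full homology of $Y_\infty$'' turns out to be misplaced: $(\dagger)$ is used exactly once, at the level of $\pi_1$ only. Working in $\operatorname{colim}_m H_\ast(Y_m)\cong H_\ast(Y_\infty)$ (the content of Lemma \ref{lem:homology}), one represents $[a]$ and $[z]$ at a level $n$ in the cofinal sequence, stabilises both to level $2n$, and uses $(\dagger')$ to replace $a\oplus e_n$ by $e_n\oplus a$ modulo an element of $G_{2n}'$, which acts trivially on $Y_{2n}$. Then the strict interchange between the deck action (precomposition of paths) and the pairing $\oplus:Y_n\times Y_n\to Y_{2n}$ gives $(e_n\oplus a)(z\oplus e_n)=z\oplus a$, i.e.\ the image of $z$ under $-\oplus a:Y_n\to Y_{2n}$; finally, path-connectivity of $Y_n$ supplies a homotopy from $-\oplus a$ to the stabilisation map $-\oplus e_n$, so the class is unchanged in the colimit. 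If you want to salvage your route, you would need an independent proof that left-stabilisation is invertible on $H_\ast(Y_\infty)$; the Eckmann--Hilton-style interchange on the covers is the mechanism that lets the paper bypass that question entirely.
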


Conceptually our proof of Theorem \ref{thm:acyclicity} is similar to Wagoner \cite[Prop. 1.2]{wagonerdelooping}.

\begin{proof}[Proof of Theorem \ref{thm:acyclicity}]
Our assumptions allow us to apply Theorem \ref{thm:gc} which asserts that $f: \mathbb{M}_{\infty}\rightarrow \Omega BM$ is a homology equivalence for integer coefficients. In particular, $f$ is bijective on path-components and induces an isomorphism
\[
H_1(M_\infty)\cong G_{\infty}/G_{\infty}'\stackrel{\cong}{\longrightarrow} \pi_1(\Omega_0 BM)\,,
\]
where $\Omega_0 BM\subset \Omega BM$ is the component of the basepoint. Consider the map of fibration sequences

\begin{equation} \label{dgr:fibrations}
\xymatrix{
Y_{\infty} \ar[r] \ar[d]^{f_0'}	\ar@{}[dr]|{\mbox{\large{$\lrcorner$}}} & M_{\infty} \ar[r] \ar[d]^-{f_0}	& K(G_{\infty}/G_{\infty}',1) \ar@{=}[d]	\\
W	\ar[r]			& \Omega_0 BM \ar[r]		& K(G_{\infty}/G_{\infty}',1)
}
\end{equation}
where the bottom row is the universal covering sequence for $\Omega_0BM$, i.e. $W$ is the universal covering space of $\Omega_0 BM$. The map $f_0: M_{\infty}\rightarrow \Omega_0 BM$ is the restriction of $f$ to the basepoint components, and the left hand square is a pullback with the horizontal arrows fibrations. The space $\Omega_0BM$ is a connected $H$-space and therefore \emph{weakly simple}, that is its fundamental group acts trivially on the integral homology of its universal covering. For the latter fact we refer the reader to the proof of \cite[Lem. 6.2]{browderhighertorsion}. Thus $G_{\infty}/G_{\infty}'$ acts trivially on $H_\ast(W)$. Moreover, by Lemma \ref{lem:decktrivial}, the action of $G_\infty/G_{\infty}'$ on $H_\ast(Y_\infty)$ is trivial. This shows that both fibrations in (\ref{dgr:fibrations}) have a simple system of local coefficients.

Consider now the map of Serre spectral sequences associated to these fibrations. It follows from Zeeman's comparison theorem \cite[Thm. 3.26]{mcclearyusersguide} that the map of coverings $f_0': Y_\infty\rightarrow W$ is an integer homology equivalence. Since $W$ is a simply connected space, this implies that $H_1(Y_{\infty})=0$, i.e. that $\pi_1Y_{\infty}=G_\infty'$ is perfect. It is known that a map into a simply connected space which is an integer homology equivalence is in fact acyclic. Thus $f_0'$ is acyclic and, because the left hand square in (\ref{dgr:fibrations}) is a homotopy pullback, so is $f_0$ (we call a map acyclic, if each of its homotopy fibres has the integral homology of a point, and this property is clearly preserved under homotopy pullbacks). This proves the theorem for the basepoint components of $\mathbb{M}_{\infty}$ and $\Omega BM$.

However, the exact same argument applies to any other component of $\mathbb{M}_{\infty}$. Namely, $f$ is a bijection on path-components, each path-component of $\mathbb{M}_{\infty}$ has the homotopy type of $M_{\infty}$ (and thus Lemma \ref{lem:decktrivial} applies to it), and $\Omega BM$ is an $H$-group, hence all its path-components are homotopy equivalent to the component of the basepoint, which is a weakly simple space.
\end{proof}

It remains to show Lemma \ref{lem:decktrivial}. Consider one component $M_n$ of the monoid $M$ and let $I=[0,1]$ denote the unit interval. As a model for the universal covering space $\tilde{M}_n\rightarrow M_n$ we may take
\begin{equation*}
\tilde{M}_n=\{\textnormal{homotopy classes of paths } \gamma: (I,0)\rightarrow (M_n,m_n) \textnormal{ rel }\partial I\}\, ,
\end{equation*}
suitably topologised \cite[\S 3.8]{mayconcise}. As the notation suggests, paths originate from the basepoint $m_n\in M_n$ and the homotopies are required to fix the endpoints of a path. To simplify the notation, we shall denote a path and its homotopy class rel $\partial I$ by the same letter. The covering projection $\tilde{M}_n\rightarrow M_n$ is evaluation at the endpoint of a path. A loop $a\in G_n$ acts on a path $\gamma\in \tilde{M}_n$ via deck translation. In our chosen model this action corresponds to precomposition of paths $\gamma\mapsto a \gamma$. We define
\[
Y_n:=G_n'\backslash\tilde{M}_n
\]
to be the quotient by the action by commutators, equipped with the quotient topology. The induced projection $Y_{n}\rightarrow M_n$ is the connected regular covering space of $M_n$ corresponding to the commutator subgroup $G_n'=[G_n,G_n]$. In the same way we define the covering space $Y_{\infty}\rightarrow M_{\infty}$ corresponding to the subgroup $G_{\infty}'\subset G_{\infty}$. Note that, despite the notation, $Y_\infty$ is \emph{not} a telescope built from the covering spaces $\{Y_n\}_{n\geq 0}$, though it is weakly equivalent to such, as we shall see shortly.

Since the homomorphism $\oplus: G_m \times G_n\rightarrow G_{m+n}$ restricts to a homomorphism of commutator subgroups $G_m'\times G_n'\rightarrow G_{m+n}'$, the lifting property of a covering space allows us to fill in the dashed arrow in the diagram
\[
\xymatrix{
Y_m\times Y_n \ar@{-->}[r]^{\oplus} \ar[d]	& Y_{m+n} \ar[d]	\\
M_m\times M_n \ar[r]^-{\mu_{m,n}}		& \;M_{m+n}\,.
}
\]
The requirement that $e_m\oplus e_n$ be $e_{m+n}$ in $Y_{m+n}$ specifies a unique continuous pairing
\begin{equation} \label{eq:pairing}
\oplus: Y_m\times Y_n\rightarrow Y_{m+n}\, .
\end{equation}
In fact, this map is just induced by pointwise multiplication of paths, i.e. for $\gamma\in Y_m$ and $\eta\in Y_n$ the homotopy class $\gamma\oplus \eta$ is represented by the path $(\gamma\oplus \eta)(t):=\mu_{m,n}(\gamma(t), \eta(t))$ for $t\in I$. In particular, we have the diagram of spaces
\[
Y_0\xrightarrow{-\oplus\, e_1} \dots \xrightarrow{-\oplus\, e_1} Y_n \xrightarrow{-\oplus\, e_1} Y_{n+1}\xrightarrow{-\oplus\, e_1}\dots
\]
and we can consider the group $\textnormal{colim}_n\, H_{\ast}(Y_n)$ (where $\ast\geq 0$ is any fixed degree). Then $G_{\infty}=\textnormal{colim}_n\, G_n$ acts upon the colimit in the following way. Let $[a]\in G_{\infty}$ be represented by some $a\in G_m$ and let $[z]\in \textnormal{colim}_n\, H_{\ast}(Y_n)$ be represented by some $z\in H_{\ast}(Y_l)$. Then choose $k\geq \textnormal{max}\{m,l\}$ and define an action by
\begin{equation} \label{eq:colimitaction}
[a][z]:=[(a\oplus e_{k-m})(z\oplus e_{k-l})]\,,
\end{equation}
where on the right hand side we use the action of $G_k$ on $H_{\ast}(Y_k)$ through deck translations of $Y_k$. One may verify that the action (\ref{eq:colimitaction}) is well-defined.

\begin{lemma} \label{lem:homology}
There is an isomorphism $\textnormal{colim}_{n}\,H_{\ast}(Y_n)\cong H_{\ast}(Y_{\infty})$ which respects the action of $G_{\infty}$.
\end{lemma}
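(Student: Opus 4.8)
The plan is to exhibit $Y_\infty$ as a mapping telescope whose stages are covering spaces of the $M_n$, and then to use that homology commutes with such telescopes. For any sequence $Z_0\to Z_1\to\cdots$ there is a natural isomorphism $H_*(\tel Z_n)\cong \textnormal{colim}_n H_*(Z_n)$, compatible with any system of levelwise self-maps; this is the main homotopy-theoretic input. The key geometric observation is that $Y_\infty$ is itself such a telescope. Indeed, $M_\infty=\tel(M_n)$ is glued from cylinders $M_n\times[n,n+1]$, and a covering space restricted over such a cylinder is pulled back from the slice $M_n$; hence the covering $Y_\infty\to M_\infty$ decomposes as $Y_\infty\cong \tel\big(Y_\infty|_{M_n}\big)$, where $Y_\infty|_{M_n}$ denotes the restriction of the covering over the $n$-th stage. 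This is the precise meaning of the earlier remark that $Y_\infty$ is weakly equivalent to, but not literally, a telescope of the $Y_n$. It gives immediately $H_*(Y_\infty)\cong \textnormal{colim}_n H_*(Y_\infty|_{M_n})$.

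Next I would compare this with $\textnormal{colim}_n H_*(Y_n)$. The connected covering $Y_\infty|_{M_n}\to M_n$ is classified by the subgroup $H_n:=(\iota_n)_{\ast}^{-1}(G_\infty')\subseteq G_n$, where $\iota_n\colon M_n\hookrightarrow M_\infty$. Since $G_n'\subseteq H_n$, the universal abelian cover $Y_n$ dominates $Y_\infty|_{M_n}$, and uniqueness of basepoint-preserving lifts yields covering maps $\rho_n\colon Y_n\to Y_\infty|_{M_n}$ that commute with the stabilisations $-\oplus e_1$. Passing to colimits produces a natural homomorphism $\Phi\colon \textnormal{colim}_n H_*(Y_n)\to H_*(Y_\infty)$, and it remains to prove that $\Phi$ is an isomorphism.

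This is the step I expect to be the main obstacle. At each finite stage $G_n'$ is in general a proper subgroup of $H_n$ --- equivalently, $-\oplus e_1\colon G_n\to G_{n+1}$ need not be injective, and an element of $G_n$ may become a commutator only after further stabilisation --- so $\rho_n$ is typically not a homology isomorphism. The point is that this discrepancy disappears in the colimit: commutator subgroups commute with filtered colimits, so $\textnormal{colim}_n G_n'=G_\infty'=\textnormal{colim}_n H_n$ as subgroups of $G_\infty$, and every element of $H_n$ is carried into $G_N'$ by some stabilisation $G_n\to G_N$. I would turn this group-level statement into the homological one by a cofinality argument: interleaving the two direct systems $\{Y_n\}$ and $\{Y_\infty|_{M_n}\}$ along the maps $\rho_n$ and the stabilisations shows that the induced map of ind-objects is an isomorphism of colimits, whence $\Phi$ is an isomorphism.

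Finally I would check that $\Phi$ respects the $G_\infty$-action. Deck transformations of $Y_\infty$ preserve the telescope filtration and restrict to deck transformations of each $Y_\infty|_{M_n}$; via $\rho_n$ these correspond to the deck action of $G_n$ on $Y_n$, so the action on $H_*(Y_\infty)\cong\textnormal{colim}_n H_*(Y_\infty|_{M_n})$ is realised levelwise. Thus for $[a]\in G_\infty$ represented by $a\in G_m$, its action on a class coming from $Y_l$ is computed at a common stage $k\geq\max\{m,l\}$ by the deck transformation $a\oplus e_{k-m}$ applied to the stabilised class, which is exactly the action (\ref{eq:colimitaction}). Hence $\Phi$ is $G_\infty$-equivariant, completing the proof.
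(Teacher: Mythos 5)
Your overall strategy --- writing $Y_\infty$ as the telescope of the restricted coverings $Y_\infty|_{M_n}$ and comparing these levelwise with the $Y_n$ --- is genuinely different from the proof in the paper, which instead maps the homotopy fibre sequence $\tel_n\, Y_n\rightarrow \tel_n\, M_n\rightarrow \tel_n\, K(G_n/G_n',1)$ to $Y_\infty\rightarrow M_\infty\rightarrow K(G_\infty/G_\infty',1)$ via an explicit lift $j$ and reads off from the equivalence of bases and total spaces that $j$ is a weak equivalence. Your first step (the covering over each cylinder $M_n\times[n,n+1]$ is pulled back from the slice, so $Y_\infty$ is a telescope of the restrictions) is sound. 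One wrinkle you should not gloss over: $Y_\infty|_{M_n}$ is in general \emph{not} the connected covering classified by $H_n=(\iota_n)_*^{-1}(G_\infty')$; it is a disjoint union of copies of that covering indexed by the cosets of the image of $G_n$ in $G_\infty/G_\infty'$, since $G_n\rightarrow G_\infty$ need not be surjective. This is repairable (every component is eventually absorbed into the basepoint component as $n$ grows), but it needs saying.

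The genuine gap is the ``interleaving'' step. An interleaving of the two ind-systems requires, for each $n$, some $N$ and a backward map $Y_\infty|_{M_n}\rightarrow Y_N$ (at least on homology) compatible with the stabilisations. By the lifting criterion, a covering map $Y_{H_n}\rightarrow Y_{G_N'}$ over $M_n\rightarrow M_N$ exists only if the stabilisation $G_n\rightarrow G_N$ carries \emph{all} of $H_n$ into $G_N'$. Each individual element of $H_n$ is carried into some $G_N'$, as you say, but unless $H_n$ is finitely generated there need be no single $N$ working for all of $H_n$ simultaneously; so the backward maps need not exist and ``the induced map of ind-objects is an isomorphism'' does not follow as stated. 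The conclusion is nonetheless true and can be rescued by a compact-support argument: a singular cycle in $Y_{H_n}$ factors through a finite complex whose fundamental group has finitely generated image in $H_n$; that finitely generated subgroup is carried into $G_N'$ for some finite $N$, so the stabilised cycle lifts to $Y_{G_N'}$, giving surjectivity of $\Phi$, and the same argument applied to bounding chains gives injectivity. Either supply this argument explicitly or switch to the fibre-sequence comparison, which avoids the issue entirely. Your verification of $G_\infty$-equivariance is essentially the same levelwise deck-translation computation as in the paper and is fine once the isomorphism itself is established.
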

\begin{proof}
We begin by describing a map $j: \textnormal{tel}_{n}\, Y_n\rightarrow Y_{\infty}$. It suffices to give maps $j_n: Y_{n}\times [n,n+1]\rightarrow Y_{\infty}$ for all $n\geq 0$ which are compatible at the endpoints of the intervals where they are glued together in the telescope. A point in the telescope $M_{\infty}$ is specified by a pair $(x,t)$ with $t\in \mathbb{R}$ and $x\in M_{\lfloor t \rfloor}$, where $\lfloor t \rfloor\in \mathbb{N}$ denotes the integral part of $t$. For $t\in \mathbb{R}$ define a path
\begin{alignat*}{2}
\alpha_t:	I	&	\rightarrow	M_{\infty}	\\
			s	&	\mapsto		(m_{\left\lfloor st\right\rfloor},st)\,.
\end{alignat*}
This is the ``straight'' path from the basepoint $m_0\in M_{\infty}$ to the basepoint $m_{\lfloor t \rfloor}\in M_{\lfloor t\rfloor}$ of the ``slice'' at coordinate $t$ in the telescope.

Now suppose $(\gamma,t)\in Y_{n}\times [n,n+1]$. Then $\gamma$ represents a homotopy class of a path in $M_{\lfloor t \rfloor}$ based at $m_{\lfloor t \rfloor}$. To this pair we assign the class in $Y_{\infty}$ of the path
\[
j_n(\gamma,t)=\alpha_t\gamma: I\rightarrow M_{\infty}\,.
\]
This map is well-defined and continuous, as one may easily check. In fact, upon passage to quotient spaces, the map $j_n$ arises as a lift of the composite map $\tilde{M}_n\times [n,n+1]\rightarrow M_{n}\times [n,n+1]\hookrightarrow M_{\infty}$ under the universal covering map $\tilde{M}_{\infty}\rightarrow M_{\infty}$. Moreover, the maps $j_n$ for all $n\geq 0$ fit nicely together whenever $t$ approaches an integer, and we obtain the desired map $j$ from the telescope to $Y_{\infty}$.

We now consider the following diagram
\[
\xymatrix{
\textnormal{tel}_{n}\, Y_n \ar[r]^-{\ev_1} \ar[d]^-{j}	& \textnormal{tel}_{n}\, M_n \ar@{=}[d] \ar[r]	& \textnormal{tel}_{n}\, K(G_n/G_n',1) \ar[d] \\
Y_{\infty} \ar[r]^-{\ev_1}									& M_{\infty} \ar[r]										& K(G_{\infty}/G_{\infty}',1)\,.
}
\]
The map denoted $\ev_1$ is evaluation at the endpoint of a path. By construction of $j$ the left hand square commutes. The right most vertical arrow is obtained by factoring the family of maps $K(G_n/G_n',1)\rightarrow K(G_{\infty}/G_{\infty'},1)$ induced by inclusion through the telescope. It is a weak homotopy equivalence, as one can see by commuting homotopy groups with the telecope. The map $M_{\infty}\rightarrow K(G_{\infty}/G_{\infty}',1)$ is the classifying map for the covering space $Y_{\infty}$. The top row of the diagram is the telescope over the natural homotopy fibre sequences $Y_{n}\rightarrow M_{n}\rightarrow K(G_{n}/G_{n}',1)$ and therefore again a homotopy fibre sequence. The right hand square commutes up to homotopy: It is easily verifyfied that the square commutes on the level of $H_1(-;\mathbb{Z})$ and hence, by the universal coefficient theorem, also on $H^{1}(-;G_{\infty}/G_{\infty}')$, but homotopy classes of maps into $K(G_{\infty}/G_{\infty}',1)$ are uniquely determined by their effect on $H^{1}(-;G_{\infty}/G_{\infty}')$. It follows that the map $j$ is a weak homotopy equivalence and therefore induces an isomorphism $H(j): \textnormal{colim}_{n}\, H_{\ast}(Y_n)\stackrel{\cong}{\longrightarrow} H_{\ast}(Y_{\infty})$.

Finally, we must check that $H(j)$ is compatible with the action of $G_{\infty}$. Consider the following two diagrams
\begin{equation} \label{dgr:triangles}
\xymatrix{
Y_{k}\times \{k\} \ar[d]^-{\textnormal{incl.}} \ar@/^1pc/[dr]^-{i_k}			&				\\
\textnormal{tel}_{n}\, Y_{n} \ar[r]^-{j}										& Y_{\infty}
}\hspace{50pt}
\xymatrix{
H_{\ast}(Y_{k}) \ar[rr]^-{H(-\oplus e_{l})} \ar[d]^-{H(i_k)}	& & H_{\ast}(Y_{k+l}) \ar@/^1pc/[dll]^-{H(i_{k+l})}	\\
H_{\ast}(Y_{\infty})									& &
}
\end{equation}
The left hand triangle defines the map $i_k: Y_k\rightarrow Y_{\infty}$. With this definition it is readily verified that the right hand triangle commutes. Now suppose we are given equivalence classes $[a]\in G_{\infty}$ and $[z]\in \textnormal{colim}_n\,H_{\ast}(Y_{n})$ represented respectively by $a\in G_m$ and $z\in H_{\ast}(Y_n)$. Then, recalling (\ref{eq:colimitaction}) we find
\begin{equation} \label{eq:colimitaction1}
\begin{split}
H(j)([a][z])&=H(j)([(a\oplus e_{k-m})(z\oplus e_{k-n})])\\&=H(i_k)((a\oplus e_{k-m})(z\oplus e_{k-n})) \\&=\alpha_k(a\oplus e_{k-m})(z\oplus e_{k-n})\,,
\end{split}
\end{equation}
for some $k\geq \textnormal{max}\{m,n\}$. On the other hand, we have the action of $G_{\infty}$ on $H_{\ast}(Y_{\infty})$ through deck translations. If $[a]\in G_{\infty}$ is represented by $a\in G_m$, then the isomorphism $G_{\infty}\cong \pi_1(M_{\infty},m_0)$ takes $[a]\mapsto \alpha_m a\bar{\alpha}_m$. Here we denote by $\bar{\alpha}_m$ the inverse path of $\alpha_m$. Thus $[a]$ acts on $\gamma\in Y_{\infty}$ as $\gamma\mapsto (\alpha_m\,a\,\bar{\alpha}_m)\,\gamma$. Therefore, using commutativity of the right hand diagram in (\ref{dgr:triangles}) we find
\[
\begin{split}
[a]H(j)([z])&=[a]H(i_n)(z)=[a\oplus e_{k-m}]H(i_{k})(z\oplus e_{k-n})\\&=\alpha_k(a\oplus e_{k-m})\bar{\alpha}_k\alpha_k (z\oplus e_{k-n})\,,
\end{split}
\]
which equals (\ref{eq:colimitaction1}). So $H(j)$ commutes with the action of $G_{\infty}$ and the assertion of the lemma follows.
\end{proof}

\begin{proof}[Proof of Lemma \ref{lem:decktrivial}]
Let $[a]\in G_\infty$ and let $z$ be a class in $H_\ast(Y_\infty)$. Let $n_0,n_1,n_2,\dots$ be the sequence in $\mathbb{N}$ which appears in the commutativity condition ($\dagger$). By Lemma \ref{lem:homology} and cofinality of $n_0,n_1,n_2,\dots$ we may assume that there is $n\in \{n_k\}_{k\in \mathbb{N}}$ so that $z$ is a class in $H_\ast(Y_n)$ and that $[a]$ is represented by an element $a\in G_n$. Recall that the colimit system of homology groups is induced by the maps $-\oplus e_1: Y_n\rightarrow Y_{n+1}$. Therefore $z\in H_{\ast}(Y_n)$ and $z\oplus e_n\in H_{\ast}(Y_{2n})$ represent the same classes in the direct limit. Similarly, $a\in G_n$ and $a\oplus e_n\in G_{2n}$ coincide in $G_{\infty}$. The commutativity relation ($\dagger'$) implies that there exists $c\in G_{2n}'$ so that $a\oplus e_n=(e_n\oplus a) c$. Since the action of $G_{2n}$ on $Y_{2n}$ factors through the abelianization, the action of $a\oplus e_n$ on $z\oplus e_n$ can be written
\begin{equation} \label{eq:mgl}
(a\oplus e_n) (z\oplus e_n)=(e_n\oplus a) (z\oplus e_n)=(e_n z)\oplus (a e_n)=z\oplus a\, .
\end{equation}
Note that $G_n/G_n'$ is a discrete subspace of $Y_n$, so we can consider the loop $a\in G_n$ as a point in $Y_n$. Therefore, using (\ref{eq:mgl}), the action of $[a]\in G_\infty$ on $[z]\in H_\ast(Y_\infty)$ can be described by choosing representatives $a\in Y_{n}$ and $z\in H_\ast(Y_n)$ and computing the image of $z$ under the map
\begin{equation} \label{eq:deckaction}
H_\ast(Y_n)\xrightarrow{H(-\oplus a)} H_\ast(Y_{2n}) \rightarrow \textnormal{colim}_{n}\, H_\ast(Y_n)\cong H_{\ast}(Y_{\infty})\, .
\end{equation}
Here $H(-\oplus a)$ is the map induced on homology by the map of spaces $-\oplus a: Y_n\rightarrow Y_{2n}$, see (\ref{eq:pairing}). Since $Y_n$ is path-connected, there is a path from $a$ to $e_n$ which induces a homotopy from $-\oplus a$ to $-\oplus e_n$ as maps $Y_{n}\rightarrow Y_{2n}$. As a consequence, the first map in (\ref{eq:deckaction}) is the stabilisation map $H(-\oplus e_n)$ for the colimit $\textnormal{colim}_n\,H_\ast(Y_n)$, which shows that the action of $a$ on $z$ is trivial.
\end{proof}

\section{An application to commutative \texorpdfstring{$\mathbb{I}$}{I}-monoids} \label{sec:applicationimonoid}

Let $\mathbb{I}$ denote the skeletal category of finite sets $\mathbf{n}=\{1,\dots,n\}$ (including the empty set $\mathbf{0}:=\emptyset$) and injective maps between them. It is a permutative category under the disjoint union of sets, i.e.
\begin{equation*} \label{eq:monoidalproductI}
(\mathbf{m},\mathbf{n})\mapsto \mathbf{m}\sqcup \mathbf{n}:=\{1,\dots,m,m+1,\dots,m+n\}\,.
\end{equation*}
The monoidal unit is given by the initial object $\mathbf{0}\in \mathbb{I}$ and the commutativity isomorphism $\mathbf{m}\sqcup \mathbf{n}\cong \mathbf{n}\sqcup \mathbf{m}$ is the evident block permutation.

Let $\mathcal{T}$ be the category of based spaces. A functor $\mathbb{I}\rightarrow \mathcal{T}$ is called an \emph{$\mathbb{I}$-space}. By the usual construction, the category of $\mathbb{I}$-spaces and natural transformations inherits a symmetric monoidal structure from $\mathbb{I}$. The following definition is standard in the literature.

\begin{definition}[E.g. {\cite[\S2.2]{schlichtkrullunitsofringspectra}}] \label{def:commutativeimonoid}
A \emph{commutative $\mathbb{I}$-monoid} $X: \mathbb{I}\rightarrow \mathcal{T}$ is a commutative monoid object in the symmetric monoidal category of $\mathbb{I}$-spaces.
\end{definition}

It is well known that for a commutative $\mathbb{I}$-monoid $X$ the space $\textnormal{hocolim}_{\mathbb{I}}\, X$ is an $E_{\infty}$-space structured by an action of the Barratt-Eccles operad. For details we refer the reader to \cite{schlichtkrullunitsofringspectra}, or to \cite{ademnilpotentktheory} and the references therein, where the basic definitions and results are summarized. Let us write $X_{n}:=X(\mathbf{n})$ for short. Let $\Sigma_n$ denote the symmetric group on $n$ letters. Unravelling the definition, a commutative $\mathbb{I}$-monoid $X$ consists of a sequence of $\Sigma_n$-spaces $X_n$ with $\Sigma_n$-fixed basepoints and basepoint preserving, equivariant structure maps
\[
\oplus: X_{m}\times X_n\rightarrow X_{m+n}
\]
for all $m,n\geq 0$ satisfying suitable associativity and unit axioms. Moreover, commutativity of $X$ implies that for all $m,n\geq 0$ the diagram
\begin{equation} \label{eq:icomm}
\xymatrix{
X_m\times X_n \ar[r]^-{\oplus} \ar[d]^-{t}	& X_{m+n} \ar[d]^-{\tau_{m,n}}	\\
X_{n}\times X_{m} \ar[r]^-{\oplus}			& X_{n+m}
}
\end{equation}
commutes, where $t$ is the transposition and $\tau_{m,n}\in \Sigma_{m+n}$ is the block permutation $\mathbf{m}\sqcup \mathbf{n}\rightarrow \mathbf{n}\sqcup \mathbf{m}$.

Let us write $X_{\infty}=\tel_n\, X_n$ for the infinite mapping telescope which is formed using the maps $X_n\rightarrow X_{n+1}$ induced by the standard maps $\mathbf{n}=\mathbf{n}\sqcup \mathbf{0}\rightarrow \mathbf{n}\sqcup \mathbf{1}$. Combining our Theorem \ref{thm:acyclicity} with results of \cite{ademnilpotentktheory} we obtain conditions under which the homotopy colimit over $\mathbb{I}$ is equivalent to the Quillen plus-construction on $X_{\infty}$.

\begin{corollary} \label{cor:imonoidplus1}
Let $N\geq 0$ be a fixed integer and let $X$ be a commutative $\mathbb{I}$-monoid such that for all $n\geq N$ the space $X_n$ is path-connected and the induced $\Sigma_n$-action on $H_\ast(X_n)$ is trivial. Then the fundamental group of $X_{\infty}$ has a perfect commutator subgroup and $X_{\infty}^+$ is an infinite loop space.
\end{corollary}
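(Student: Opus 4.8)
The plan is to verify that the commutative $\mathbb{I}$-monoid $X$ satisfies the hypotheses of Theorem \ref{thm:acyclicity}, and then to extract the infinite loop space statement from the resulting identification $X_\infty^+ \simeq \Omega B M$ for a suitable monoid $M$. The natural monoid to consider is obtained by rigidifying $X$ into a topological monoid $M$ whose $n$-th component is (a model for) $X_n$, using the associative structure maps $\oplus \colon X_m \times X_n \to X_{m+n}$; since $X$ is a commutative $\mathbb{I}$-monoid, these maps are strictly associative and unital, so $M := \coprod_{n \geq 0} X_n$ is a topological monoid with $\pi_0 M = \mathbb{N}$ once we arrange the components $X_n$ to be path-connected, which holds for $n \geq N$ by hypothesis.

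The key step is to check condition $(\dagger)$. I would take the cofinal sequence $n_k := N + k$ (or any cofinal sequence in the range $n \geq N$). The commutativity diagram (\ref{eq:icomm}) with $m = n$ shows that for $a \in H_1(X_n)$ the two stabilisations $a \cdot m_n$ and $m_n \cdot a$ in $H_1(X_{2n})$ differ precisely by the action of the block permutation $\tau_{n,n} \in \Sigma_{2n}$. Explicitly, $a \cdot m_n = \tau_{n,n} \cdot (m_n \cdot a)$ in $H_1(X_{2n})$, where $\tau_{n,n}$ acts via the given $\Sigma_{2n}$-action on $X_{2n}$. But by hypothesis the induced $\Sigma_{2n}$-action on $H_\ast(X_{2n})$ is trivial for $2n \geq N$, so $\tau_{n,n}$ acts as the identity on $H_1(X_{2n})$ and hence $a \cdot m_n = m_n \cdot a$. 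This establishes $(\dagger)$. I would also need the localization $H_\ast(M)[(\pi_0 M)^{-1}]$ to be constructible by right-fractions; this follows because $\pi_0 M = \mathbb{N}$ is generated by a single central element once $(\dagger)$ is in force (commutativity on $H_0$ is automatic, $\pi_0 M$ being abelian), so the multiplicative set satisfies the Ore condition and Example \ref{ex:central} type reasoning applies.

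With $(\dagger)$ verified, Theorem \ref{thm:acyclicity} gives that $\pi_1(X_\infty)$ has a perfect commutator subgroup and that the comparison map $f \colon \mathbb{M}_\infty \to \Omega B M$ is acyclic; Corollary \ref{cor:weakequivalence} then yields a weak equivalence $X_\infty^+ \simeq \Omega_0 B M$ on the basepoint component (noting $\mathbb{M}_\infty \simeq \mathbb{Z} \times X_\infty$ and that the plus-construction is taken componentwise). To conclude that $X_\infty^+$ is an infinite loop space, I would invoke the $E_\infty$-structure: since $X$ is commutative, $\textnormal{hocolim}_{\mathbb{I}} X$ is an $E_\infty$-space structured by the Barratt--Eccles operad, and under the stated connectivity and triviality hypotheses the results of \cite{ademnilpotentktheory} identify $\textnormal{hocolim}_{\mathbb{I}} X$ with $X_\infty^+$ up to the relevant group-completion. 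As $\Omega B M$ is the zeroth space of a spectrum (via the $E_\infty$-structure delooping), transporting this structure along the weak equivalence $f^+$ equips $X_\infty^+$ with an infinite loop space structure.

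The main obstacle I anticipate is the bookkeeping in the $(\dagger)$ verification: the relation $a \cdot m_n = \tau_{n,n} \cdot (m_n \cdot a)$ must be read off correctly from (\ref{eq:icomm}), and one must be careful that the triviality of the $\Sigma_{2n}$-action is being applied in the right degree and to the correct stabilised classes. A secondary technical point is matching the two notions of infinite loop space structure — the intrinsic $E_\infty$-structure on $\textnormal{hocolim}_{\mathbb{I}} X$ coming from the Barratt--Eccles action, and the delooping $\Omega B M$ produced by group completion — and ensuring that the weak equivalence $f^+$ is compatible with both, so that the infinite loop space structure genuinely descends to $X_\infty^+$ rather than merely to an abstractly equivalent space.
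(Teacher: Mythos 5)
Your reduction to Theorem \ref{thm:acyclicity} is essentially the paper's argument: you form the monoid $\mathbf{X}=\coprod_{n\geq 0}X_n$, use diagram (\ref{eq:icomm}) together with the triviality of the $\Sigma_{2n}$-action on $H_\ast(X_{2n})$ to see that left- and right-stabilisation agree in $H_1$ (the paper records the slightly stronger fact that the whole Pontryagin ring $H_\ast(\mathbf{X})$ is abelian, which gives $(\dagger)$ via Example \ref{ex:central}), and you conclude that $\pi_1(X_\infty)$ has a perfect commutator subgroup and $X_\infty^+\simeq\Omega_0B\mathbf{X}$. That part is correct, modulo the harmless normalisation to $N=0$, which the paper handles by replacing $X_n$ with a point for $n<N$.

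The gap is in your final step. You assert that $\Omega B\mathbf{X}$ is the zeroth space of a spectrum ``via the $E_\infty$-structure delooping,'' but $\mathbf{X}$ carries no $E_\infty$-structure: commutativity of $X$ as an $\mathbb{I}$-monoid only gives $y\oplus x=\tau_{m,n}(x\oplus y)$, i.e.\ commutativity up to a block permutation that need not act trivially on the spaces $X_n$ themselves --- only on their homology. The whole point of the corollary is that $\mathbf{X}$ need not be homotopy commutative, so $\Omega B\mathbf{X}$ is a priori only a loop space and the infinite loop structure cannot be ``transported along $f^+$'' from it. The genuine $E_\infty$-space here is $\coprod_{n}E\Sigma_n\times_{\Sigma_n}X_n\simeq\textnormal{hocolim}_{\mathbb{I}}X$, not $\mathbf{X}$; and your appeal to \cite{ademnilpotentktheory} to identify $\textnormal{hocolim}_{\mathbb{I}}X$ with $X_\infty^+$ uses the extra injectivity hypotheses of Corollary \ref{cor:imonoidplus2}, which are not assumed in Corollary \ref{cor:imonoidplus1}. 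The paper's route is different: group-complete the $E_\infty$-space $\coprod_n E\Sigma_n\times_{\Sigma_n}X_n$ to get the infinite loop space $\mathbb{Z}\times(E\Sigma_\infty\times_{\Sigma_\infty}X_\infty)^+$, note that the projection to $\mathbb{Z}\times B\Sigma_\infty^+$ is an infinite loop map so its homotopy fibre is an infinite loop space, and identify that fibre with $X_\infty^+$ by Berrick's fibration theorem \cite[Thm. 1.1(b)]{berrickplusconstruction} --- using exactly the two facts your argument does establish, namely that $\Sigma_\infty$ acts trivially on $H_\ast(X_\infty)$ and that $X_\infty^+\simeq\Omega_0B\mathbf{X}$ is simple, hence nilpotent. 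Some such argument is needed to obtain the infinite loop space structure.
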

\begin{proof}
Without loss of generality we may assume $N=0$, since we can always replace $X$ by the commutative $\mathbb{I}$-monoid $X'$ with $X'_n=\textnormal{pt}$ for all $n<N$ and $X'_n=X_n$ for all $n\geq N$. Then $X'$ satisfies the assumptions of the corollary for $N=0$ and $X'_\infty\simeq X_\infty$.

Now consider the topological monoid $\mathbf{X}:=\coprod_{n\geq 0}X_n$. The commutative diagrams of (\ref{eq:icomm}) and the assumption that $\Sigma_n$ acts trivially on $H_{\ast}(X_n)$ for all $n\geq 0$ imply that the Pontryagin ring $H_{\ast}(\mathbf{X})$ is abelian. Thus we can apply Theorem \ref{thm:acyclicity} (cf. Example \ref{ex:central}). It follows that the fundamental group of $X_\infty$ has a perfect commutator subgroup and that $X_{\infty}^+\simeq \Omega_0 B\mathbf{X}$. Hence $X_\infty^+$ is a \emph{simple} space, i.e. its fundamental group is abelian and acts trivially on all higher homotopy groups. On passage to colimits as $n\to \infty$ we also have that $\Sigma_{\infty}$ acts trivially on $H_{\ast}(X_{\infty})\cong\textnormal{colim}_n\,H_{\ast}(X_{n})$. The assertion of the corollary follows now from \cite[Thm. 3.1]{ademnilpotentktheory}. The argument is short and we shall spell it out, because it explains how the condition that $X_\infty^+$ be simple enters into the proof.

The axioms of a commutative $\mathbb{I}$-monoid imply that $\coprod_{n\geq 0} E\Sigma_n\times_{\Sigma_n}X_n$ is an $E_{\infty}$-space and thus, using Theorem \ref{thm:rw}, its group completion is the infinite loop space $\mathbb{Z}\times (E\Sigma_\infty\times_{\Sigma_{\infty}}X_\infty)^+$. The projection to $\mathbb{Z}\times B\Sigma_{\infty}^+$, which is induced by collapsing $X_\infty$ to a point, is a map of infinite loop spaces, so its homotopy fibre is an infinite loop space too. It remains to show that this homotopy fibre is precisely $X_{\infty}^+$. This last fact follows from Berrick's fibration theorem \cite[Thm. 1.1(b)]{berrickplusconstruction} applied to the fibration
\[
X_\infty\rightarrow \mathbb{Z}\times E\Sigma_\infty\times_{\Sigma_\infty}X_\infty\rightarrow \mathbb{Z}\times B\Sigma_{\infty}\,,
\]
using the fact that the fundamental group of the base, that is, $\Sigma_\infty$ acts trivially on $H_\ast(X_\infty)$ and that the fibre after plus-construction, that is, $X_\infty^+$ is simple, hence nilpotent.
\end{proof}

\begin{corollary} \label{cor:imonoidplus2}
Let $X$ be as in Corollary \ref{cor:imonoidplus1}. If in addition all maps $X_m\rightarrow X_{n}$ induced by injections $\mathbf{m}\rightarrow \mathbf{n}$ are injective, and for all $x\in X_m$ and $y\in X_n$ the element $x\oplus y\in X_{m+n}$ is in the image of a map induced by a non-identity order preserving injection if and only if $x$ or $y$ is, then the inclusion $X_{\infty}\rightarrow \textnormal{hocolim}_{\mathbb{I}}X$ induces a weak homotopy equivalence of infinite loop spaces $X_{\infty}^+\simeq \textnormal{hocolim}_{\mathbb{I}}X$.
\end{corollary}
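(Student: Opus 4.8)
The plan is to identify $\textnormal{hocolim}_{\mathbb{I}}X$ with the plus-construction $X_\infty^+$ furnished by Corollary \ref{cor:imonoidplus1}, by showing that the natural map $j\colon X_\infty\to \textnormal{hocolim}_{\mathbb{I}}X$ is an integral homology equivalence onto a simple space and then appealing to the universal property of the plus-construction. Here $X_\infty=\tel_n X_n$ is the homotopy colimit of $X$ restricted to the subcategory $\mathbb{N}\subset\mathbb{I}$ of standard successive inclusions $\mathbf{n}\to\mathbf{n}\sqcup\mathbf{1}$, and $j$ is induced by the inclusion $\mathbb{N}\hookrightarrow\mathbb{I}$. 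The two additional hypotheses---injectivity of the structure maps and the indecomposability condition on $x\oplus y$---are what guarantee that this $\mathbb{I}$-monoid is free enough that $\textnormal{hocolim}_{\mathbb{I}}X\simeq\textnormal{colim}_{\mathbb{I}}X$ and that $X_\infty$ sits inside as the honest subspace of order-preservingly stabilised elements, so that $j$ is literally the asserted inclusion; this is the situation analysed in \cite{ademnilpotentktheory}.

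First I would record that $\textnormal{hocolim}_{\mathbb{I}}X$ is a connected infinite loop space, hence simple. Connectivity is immediate, since $\pi_0(\textnormal{hocolim}_{\mathbb{I}}X)=\textnormal{colim}_{\mathbb{I}}\pi_0(X)=\ast$ because each $X_n$ is path-connected. As $X$ is a commutative $\mathbb{I}$-monoid, $\textnormal{hocolim}_{\mathbb{I}}X$ carries the structure of an $E_\infty$-space; being connected it is grouplike, and so it is an infinite loop space, and in particular a connected $H$-space and therefore a simple space.

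The technical core is to show that $j$ is an integral homology isomorphism, and for this I would run the Bousfield--Kan homology spectral sequence of the homotopy colimit, whose $E_2$-page is the derived colimit $\textnormal{colim}^s_{\mathbb{I}}\,H_t(X_\bullet)$. The key observation is that the functor $H_t(X_\bullet)\colon\mathbb{I}\to\textnormal{Ab}$ factors through the projection $q\colon\mathbb{I}\to\mathbb{N}$: any two injections $\mathbf{n}\to\mathbf{m}$ differ by post-composition with an element of $\Sigma_m$, and $\Sigma_m$ acts trivially on $H_t(X_m)$ by hypothesis, so every injection $\mathbf{n}\to\mathbf{m}$ induces the one stabilisation map on homology. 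Since the comma categories $q/n\cong\mathbb{I}_{\leq n}$ have the initial object $\mathbf{0}$ and hence contractible nerve, the left Kan extension along $q$ of a constant diagram is again constant, and a projection-formula computation of its derived functors gives $\textnormal{colim}^s_{\mathbb{I}}\,q^{\ast}M\cong\textnormal{colim}^s_{\mathbb{N}}M$ for every $\mathbb{N}$-diagram $M$. As $\mathbb{N}$ is filtered this vanishes for $s>0$ and equals $H_t(X_\infty)$ for $s=0$, so the spectral sequence collapses and identifies $H_\ast(\textnormal{hocolim}_{\mathbb{I}}X)$ with $H_\ast(X_\infty)$ compatibly with $j$.

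Finally I would invoke Corollary \ref{cor:imonoidplus1}, by which $\pi:=\pi_1(X_\infty)$ has perfect commutator subgroup; an elementary argument then shows the maximal perfect subgroup of $\pi$ is exactly $[\pi,\pi]$, so $\pi_1(X_\infty^+)\cong H_1(X_\infty)$. Since $\textnormal{hocolim}_{\mathbb{I}}X$ is simple, $j_\ast$ on fundamental groups kills this perfect subgroup, so $j$ factors through the plus-construction as $X_\infty\xrightarrow{\;}X_\infty^+\xrightarrow{\,g\,}\textnormal{hocolim}_{\mathbb{I}}X$. The map $g$ is then a homology isomorphism between simple, hence nilpotent, spaces, and Dror's generalised Whitehead theorem forces $g$ to be a weak equivalence; transporting the infinite loop structures---both $X_\infty^+\simeq\Omega_0 B\mathbf{X}$ and $\textnormal{hocolim}_{\mathbb{I}}X$ arise from the commutative monoid structure on $X$---upgrades $g$ to an equivalence of infinite loop spaces. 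I expect the main obstacle to be the homology bookkeeping: verifying rigorously that the derived colimit over $\mathbb{I}$ collapses to the filtered colimit over $\mathbb{N}$ and that the resulting isomorphism is the one induced by $j$, with the compatibility of the two infinite loop structures under $g$ as the second point requiring care.
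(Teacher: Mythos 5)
The paper's own proof of this corollary is a one-line citation of \cite[Thm.~3.3]{ademnilpotentktheory}, so you are in effect reproving that theorem from scratch; your overall strategy (Bousfield--Kan spectral sequence, the observation that triviality of the $\Sigma_n$-actions on homology forces $H_t(X_\bullet)$ to factor through the projection $q\colon\mathbb{I}\to\mathbb{N}$, then plus-construction formalities) is a reasonable route. However, the justification of the central step is wrong as written. You compute $\textnormal{colim}^s_{\mathbb{I}}q^{\ast}M$ via the derived left Kan extension along $q$ and claim that contractibility of the comma categories $q/n\cong\mathbb{I}_{\leq n}$ (initial object $\mathbf{0}$) plus a projection formula gives $L_s\textnormal{Lan}_q(q^{\ast}M)\cong M$ concentrated in degree $0$. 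This is false: take $M$ with $M(2)=\mathbb{Z}$ and $M(n)=0$ otherwise. Then $q^{\ast}M$ restricted to $\mathbb{I}_{\leq 2}$ is the extension by zero of the trivial $\Sigma_2$-module $\mathbb{Z}$ from the full subcategory on $\mathbf{2}$, and one computes $L_1\textnormal{Lan}_q(q^{\ast}M)(2)=H_1(\mathbb{I}_{\leq 2};q^{\ast}M)\cong H_1(\Sigma_2;\mathbb{Z})=\mathbb{Z}/2\neq 0$; the group homology of the symmetric groups enters exactly here, and an initial object in $\mathbb{I}_{\leq n}$ only controls constant coefficients. The isomorphism you actually need, $\textnormal{colim}^s_{\mathbb{I}}q^{\ast}M\cong\textnormal{colim}^s_{\mathbb{N}}M$, is true, but for a different reason: $q$ is homotopy cofinal because the comma categories in the \emph{other} direction, $n\downarrow q\cong\mathbb{I}_{\geq n}$, have contractible nerves. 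That contractibility is not formal (these categories have neither initial nor terminal objects); it follows from the shift functor $S(\mathbf{m})=\mathbf{m}\sqcup\mathbf{n}$ and the two natural transformations $\textnormal{id}\Rightarrow S$ and $\textnormal{const}_{\mathbf{n}}\Rightarrow S$. Without replacing your Kan-extension computation by this cofinality argument the proof does not go through.

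A further warning sign is that your argument nowhere uses the two additional hypotheses of the corollary (injectivity of the induced maps and the condition on decomposables); as written it would establish the conclusion already under the hypotheses of Corollary \ref{cor:imonoidplus1}. Those extra hypotheses are precisely what \cite[Thm.~3.3]{ademnilpotentktheory} demands, and the paper's proof is just that citation combined with Corollary \ref{cor:imonoidplus1}. Before asserting the stronger statement you should either locate where the freeness hypotheses are genuinely needed or convince yourself that the cofinality argument really renders them superfluous for the homology comparison. Smaller points to tidy: the reduction to $N=0$ replaces $X$ by a different $\mathbb{I}$-space, and you must check this does not change $\textnormal{hocolim}_{\mathbb{I}}X$ up to weak equivalence (it may also destroy the injectivity hypotheses, though on your route that is moot); and the identification of the edge homomorphism with $j_{\ast}$ should be made explicit via naturality of the spectral sequence along $\mathbb{N}\to\mathbb{I}$. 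The endgame (factoring $j$ through $X_{\infty}^{+}$, simplicity of both sides, and the homology Whitehead theorem for nilpotent spaces) is fine.
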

\begin{proof}
This follows directly from Corollary \ref{cor:imonoidplus1} and \cite[Thm. 3.3]{ademnilpotentktheory}.
\end{proof}

\section{Perfection of the commutator group} \label{sec:perfection}

Let $(G_n)_{n\geq 0}$ be a sequence of groups with homomorphisms $\oplus: G_m\times G_n\rightarrow G_{m+n}$ for all $m,n\geq 0$ and suppose that $(\dagger')$ holds. We now repeat an argument of Randal-Williams \cite[Prop. 3.1]{randalwilliamsperfection} to show that in this situation $G_\infty=\textnormal{colim}_n\,G_n$ has a perfect commutator subgroup.

\begin{lemma} \label{lem:quasiperfect}
The commutator subgroup of $G_\infty$ is perfect.
\end{lemma}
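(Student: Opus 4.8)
The plan is to show that $G_\infty'$ is perfect by proving that every commutator of elements of $G_\infty$ already lies in the second derived subgroup $G_\infty''$; since such commutators generate $G_\infty'$, this yields $G_\infty' = G_\infty''$, as desired. Because $G_\infty = \mathrm{colim}_n G_n$ and the sequence $(n_k)$ appearing in $(\dagger')$ is cofinal, any two given classes in $G_\infty$ may be represented by elements $a, b \in G_n$ for a common $n = n_k$, so it suffices to prove that the image in $G_\infty$ of the commutator $[a,b] \in G_n'$ lies in $G_\infty''$.

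First I would pass to $G_{2n}$ and record the two structural inputs. Writing $L(x) = x \oplus e_n$ and $R(x) = e_n \oplus x$ for the two stabilisation homomorphisms $G_n \to G_{2n}$, the fact that $\oplus \colon G_n \times G_n \to G_{2n}$ is a homomorphism yields the block relation $L(x)R(y) = x \oplus y = R(y)L(x)$, so that $L(x)$ and $R(y)$ commute for all $x, y$; meanwhile $(\dagger')$ provides elements $c_x \in G_{2n}'$ with $L(x) = R(x)c_x$. The heart of the argument is then a commutator computation: starting from $[a,b] \oplus e_n = [L(a), L(b)]$, I would substitute $L(a) = R(a)c_a$ and $L(b) = R(b)c_b$ and expand using the identities $[xy,z] = {}^x[y,z]\,[x,z]$ and $[x,yz] = [x,y]\,{}^y[x,z]$. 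The commutation of the two blocks makes the $[L(a),R(b)]$-type terms vanish, the terms in which both entries are the correction commutators $c_a,c_b$ lie in $[G_{2n}', G_{2n}'] = G_{2n}''$, and the mixed terms of the form $[c_a, R(b)]$ collapse, again by block commutation, to $R(w)$ for an honest commutator $w \in G_n'$.

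The outcome of this bookkeeping is that, modulo $G_{2n}''$, the right stabilisation $R([a,b])$ and the left stabilisation $L([a,b]) = [a,b]\oplus e_n$ of the commutator are identified (up to inversion), whereas $(\dagger')$ only identifies them modulo the first derived subgroup. I expect the main obstacle to be exactly this passage from \emph{modulo $G_\infty'$} (all that $(\dagger')$ literally asserts) to \emph{modulo $G_\infty''$} (what perfectness requires): a priori the discrepancy between the two stabilisations is only a commutator, not a product of commutators of commutators. To close the gap I would exploit the freedom in the cofinal sequence, re-applying $(\dagger')$ at the higher level to absorb the leftover first-order discrepancy in the colimit, an Eckmann--Hilton/Eilenberg-swindle type manoeuvre reflecting that in $G_\infty$ the block sum and the group multiplication share a unit up to commutators. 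This is the step that mirrors Randal-Williams \cite[Prop. 3.1]{randalwilliamsperfection}, and once it is carried out every commutator lands in $G_\infty''$ and the lemma follows.
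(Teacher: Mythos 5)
Your overall target (showing $[a,b]\in G_\infty''$ for all $a,b$) and your two structural inputs (the block commutation $L(x)R(y)=x\oplus y=R(y)L(x)$ and the correction elements supplied by $(\dagger')$) are exactly those of the paper's proof, but the execution has a genuine gap in two places. First, the intermediate claim that the mixed terms $[c_a,R(b)]$ ``collapse, again by block commutation, to $R(w)$ for an honest commutator $w\in G_n'$'' is false: block commutation only controls commutators between the images of $L$ and of $R$, whereas $c_a$ is an arbitrary element of $G_{2n}'$ lying in the image of neither, so $[c_a,R(b)]$ is merely an element of $[G_{2n}',G_{2n}]\subseteq G_{2n}'$ and there is no reason for it to lie in $G_{2n}''$ or to be a right-stabilised commutator. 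Substituting for \emph{both} $a$ and $b$ at once is what creates these uncontrolled cross terms; the paper avoids them by substituting only $b\oplus e_n=(e_n\oplus b)d$, so that Hall's identity together with $[a\oplus e_n,e_n\oplus b]=e$ reduces $[a\oplus e_n,b\oplus e_n]$ exactly (not merely modulo $G''$) to a conjugate of $[a\oplus e_n,d]$ with $d\in G_{2n}'$.

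Second, the step you defer to an ``Eckmann--Hilton/Eilenberg-swindle type manoeuvre'' is precisely the mathematical content of the lemma and cannot be absorbed into the colimit by fiat; note also that identifying $L([a,b])$ with $R([a,b])$ modulo $G''$ would not by itself place $[a,b]$ in $G_\infty''$. The paper's actual second step is to stabilise $[a\oplus e_n,d]$ into $G_{4n}$ and apply $(\dagger')$ once more in the form $a\oplus e_{3n}=(e_{2n}\oplus a\oplus e_n)c$ with $c\in G_{4n}'$; the crucial point is that $e_{2n}\oplus a\oplus e_n$ and $d\oplus e_{2n}$ now occupy disjoint blocks and therefore commute on the nose, so Hall's identity converts $[a\oplus e_{3n},d\oplus e_{2n}]$ into a conjugate of $[c,d\oplus e_{2n}]$, a commutator of two elements of $G_{4n}'$; passing to $G_\infty$ and using that $G_\infty''$ is normal finishes the proof. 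You correctly sensed that a second application of $(\dagger')$ is needed, but without the block repositioning that makes the main term vanish identically, the leftover first-order discrepancy does not go away, so as written the argument is incomplete.
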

\begin{proof}
For simplicity let us assume that the sequence $n_0,n_1,n_2,\dots$ in $(\dagger')$ is all of $\mathbb{N}$. The proof is essentially the same in the more general case of only a cofinal subsequence. It suffices to show that every commutator $[a,b]$ in $G_\infty$ can be written as a commutator $[c,d]$ with $c,d\in G'_\infty$. We may assume that $a,b\in G_\infty$ be represented by $a,b\in G_n$. Using ($\dagger'$) we find
\begin{equation*}
[a\oplus e_n,b\oplus e_n]=[a\oplus e_n,(e_n\oplus b) d]
\end{equation*}
for some $d\in G_{2n}'$. Since the product $\oplus$ is a homomorphism, we have that
\begin{equation*}
\begin{split}
(a\oplus e_n) (e_n\oplus b)&=(a e_n)\oplus (e_n b)=a\oplus b\\&=(e_n a)\oplus (b e_n)=(e_n\oplus b) (a\oplus e_n)\, ,
\end{split}
\end{equation*}
that is $a\oplus e_n$ and $e_n\oplus b$ commute with respect to the product in $G_{2n}$. Thus the commutator can be written as
\begin{equation*}
[a\oplus e_n,b\oplus e_n]=(e_n\oplus b) [a\oplus e_n, d] (e_n\oplus b)^{-1}\, ,
\end{equation*}
where we used Hall's identity $[x,yz]=[x,y][x,z]^y$. Multiplication by $e_{2n}$ from the right defines a homomorphism $G_{2n}\rightarrow G_{4n}$. Applied to the previous line it gives
\begin{equation*}
[a\oplus e_{3n},b\oplus e_{3n}]=(e_n\oplus b\oplus e_{2n}) [a\oplus e_{3n},d\oplus e_{2n}] (e_n\oplus b\oplus e_{2n})^{-1}\, .
\end{equation*}
Again by $(\dagger')$ there exists $c\in G_{4n}'$ such that
\begin{equation*}
a\oplus e_{3n}=(a\oplus e_n)\oplus e_{2n}=(e_{2n}\oplus a\oplus e_n) c\, .
\end{equation*}
Now $e_{2n}\oplus a\oplus e_n$ commutes with $d\oplus e_{2n}$ in $G_{4n}$, and using Hall's identity we can write
\begin{equation*}
\begin{split}
&[a\oplus e_{3n},b\oplus e_{3n}]\\&\hspace{0pt}=(e_n\oplus b\oplus e_{2n}) (e_{2n}\oplus a \oplus e_n) [c,d\oplus e_{2n}] (e_{2n}\oplus a\oplus e_n)^{-1} (e_n\oplus b\oplus e_{2n})^{-1}\, .
\end{split}
\end{equation*}
Let $v$ be the element which is represented by $(e_n\oplus b\oplus e_{2n}) (e_{2n}\oplus a \oplus e_n)\in G_{4n}$ in the direct limit $G_\infty$. Then in $G_\infty$ the previous equation reads
\begin{equation*}
[a,b]=v [c,d] v^{-1}\,,
\end{equation*}
where $c,d\in G_\infty'$. Since conjugation by $v$ is a homomorphism, we conclude $[a,b]\in [G_\infty',G_\infty']$.
\end{proof}

\bibliographystyle{amsplain} 

\providecommand{\bysame}{\leavevmode\hbox to3em{\hrulefill}\thinspace}
\providecommand{\MR}{\relax\ifhmode\unskip\space\fi MR }
\providecommand{\MRhref}[2]{%
  \href{http://www.ams.org/mathscinet-getitem?mr=#1}{#2}
}
\providecommand{\href}[2]{#2}

\end{document}